\documentclass{amsart} 

\usepackage[english]{babel}
\usepackage[latin1]{inputenc}

\usepackage{amsmath}
\usepackage{amsthm}
\usepackage{amssymb}
\usepackage{tikz}
\usepackage{csquotes}

\usepackage{imakeidx} 
\usepackage{appendix}
\usepackage{tikz-cd}
\usepackage{verbatim}
\usepackage{enumitem}
\usepackage{multicol}
\usepackage{adjustbox}
\usepackage{mathtools}

\usepackage{aliascnt}
\usepackage[backref=page]{hyperref}
\usepackage{cleveref}

\hypersetup{colorlinks=true,linkcolor=blue,anchorcolor=blue,citecolor=blue}

\newtheorem{thm}{Theorem}[section]
\crefname{thm}{Theorem}{Theorems}
\Crefname{thm}{Theorem}{Theorems}

\newaliascnt{prop}{thm}
\newtheorem{prop}[prop]{Proposition}
\aliascntresetthe{prop}
\crefname{prop}{Proposition}{Propositions}
\Crefname{prop}{Proposition}{Propositions}

\newaliascnt{lemma}{thm}
\newtheorem{lemma}[lemma]{Lemma}
\aliascntresetthe{lemma}
\crefname{lemma}{Lemma}{Lemmas}
\Crefname{lemma}{Lemma}{Lemmas}

\newaliascnt{cor}{thm}

\aliascntresetthe{cor}
\crefname{cor}{Corollary}{Corollaries}
\Crefname{cor}{Corollary}{Corollaries}

\newaliascnt{conj}{thm}

\aliascntresetthe{conj}
\crefname{conj}{Conjecture}{Conjectures}
\Crefname{conj}{Conjecture}{Conjectures}

\newaliascnt{question}{thm}

\aliascntresetthe{question}
\crefname{question}{Question}{Questions}
\Crefname{question}{Question}{Questions}

\theoremstyle{definition}

\newaliascnt{defin}{thm}

\aliascntresetthe{defin}
\crefname{defin}{Definition}{Definitions}
\Crefname{defin}{Definition}{Definitions}

\newaliascnt{construction}{thm}

\aliascntresetthe{construction}
\crefname{construction}{Construction}{Constructions}
\Crefname{construction}{Construction}{Constructions}

\newaliascnt{example}{thm}

\aliascntresetthe{example}
\crefname{example}{Example}{Examples}
\Crefname{example}{Example}{Examples}

\newaliascnt{claim}{thm}

\aliascntresetthe{claim}
\crefname{claim}{Claim}{Claims}
\Crefname{claim}{Claim}{Claims}

\newaliascnt{exercise}{thm}

\aliascntresetthe{exercise}
\crefname{exercise}{Exercise}{Exercises}
\Crefname{exercise}{Exercise}{Exercises}

\newaliascnt{assumption}{thm}

\aliascntresetthe{assumption}
\crefname{assumption}{Assumption}{Assumptions}
\Crefname{assumption}{Assumption}{Assumptions}

\theoremstyle{remark}

\newaliascnt{rmk}{thm}
\newtheorem{rmk}[rmk]{Remark}
\aliascntresetthe{rmk}
\crefname{rmk}{Remark}{Remarks}
\Crefname{rmk}{Remark}{Remarks}

\numberwithin{equation}{section} 

\newcommand{\Z}{\mathbb Z}

\renewcommand{\P}{\mathbb P}

\newcommand{\mc}[1]{\mathcal{#1}}
\newcommand{\cl}{\overline}
\newcommand{\set}[1]{\left\{#1\right\}}
\renewcommand{\phi}{\varphi}

\newcommand{\on}[1]{\operatorname{#1}}

\title{Equivariant intermediate Jacobians and intersections of two quadrics}

\address{CNRS\\
	Institut Galil\'ee\\
	Universit\'e Sorbonne Paris Nord\\    
	93430, Villetaneuse, France}

\author{Federico Scavia}
\email{scavia@math.univ-paris13.fr}

\makeatletter
\@namedef{subjclassname@2020}{\textup{2020} Mathematics Subject Classification}
\makeatother

\date{May 5, 2026}

\subjclass[2020]{14E08;	14M20, 14J50, 14K30, 20J06}

\begin{document}

\begin{abstract}
    We present a short proof of the following theorem of Hassett and Tschinkel: for every finite group $G$, a $G$-equivariant smooth complete intersection of two quadrics in $\mathbb{P}^5_{\mathbb{C}}$ is projectively $G$-linear if and only if it contains a $G$-invariant line.
\end{abstract}

\maketitle
	
	\section{Introduction}
	
	Let $k$ be an algebraically closed field, let $G$ be a finite group, and let $X$ be a $G$-variety, that is, a separated integral $k$-scheme of finite type endowed with a $G$-action over $k$. We say that $X$ is \emph{projectively $G$-linear} if there exist a group homomorphism $\rho\colon G\to \on{PGL}_{n+1}$ and a $G$-equivariant birational equivalence between $X$ and $\P^n_k$, where $G$ acts on $\P^n_k$ via $\rho$. 
    
    In this note, we give an alternative proof of the following theorem of Hassett and Tschinkel \cite[Theorem 24]{hassett2022equivariant}.
    
	\begin{thm}[Hassett--Tschinkel]\label{hassett-tschinkel}
		Let $k$ be an algebraically closed field of characteristic zero, let $G$ be a finite subgroup of $\on{PGL}_6$, and let $X\subset \P^5_k$ be a smooth complete intersection of two quadrics such that the restriction of the natural $\on{PGL}_6$-action on $\P^5_k$ to $G$ leaves $X$ invariant. Then $X$ is projectively $G$-linear if and only if it contains a $G$-invariant line.	
	\end{thm}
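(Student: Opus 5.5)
The two implications are handled separately: the easy direction by a classical projection construction, and the converse through the intermediate Jacobian $J(X)$, which for the threefold $X$ is the Jacobian $J(C)$ of a genus-$2$ curve $C$.

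\emph{A $G$-invariant line gives linearity.} Suppose $\ell\subset X$ is a $G$-invariant line. Projection from $\ell$ gives a rational map $X\dashrightarrow \P^3_k$ onto the complementary $\P^3$ in $\P^5_k/\ell$. This map is birational: a general $3$-plane $\Lambda\supset\ell$ meets $X$ in $\ell$ and one further point, because the residual intersection of $\Lambda$ with two quadrics containing $\ell$ is a single point. The projection is $G$-equivariant, since $G$ preserves $\ell$ and acts linearly on $\P^5$. So $G$ acts on the target $\P^3_k=\P(k^6/W_\ell)$ through the induced projective representation, where $W_\ell\subset k^6$ is the $2$-dimensional subspace defining $\ell$. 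Strictly, $G\subset \on{PGL}_6$ lifts only to a central extension in $\on{GL}_6$, but the quotient action on $\P(k^6/W_\ell)$ is still defined by a homomorphism $G\to\on{PGL}_4$. Hence $X$ is projectively $G$-linear.

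\emph{Linearity gives a $G$-invariant line.} I will use the $G$-equivariant intermediate Jacobian. Write $X=\{Q_1=Q_2=0\}$. Let $C$ be the hyperelliptic genus-$2$ curve that double covers the pencil $\P^1$ of quadrics, branched over the $6$ singular members. The Fano variety $F(X)$ of lines on $X$ is a torsor under $J(C)\cong J(X)$, and in fact $F(X)\cong J(C)$ non-canonically. The group $G$ acts on all of these compatibly. The key intermediate statement I aim for is an equivariant Clemens--Griffiths obstruction.

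If $X$ is $G$-birational to $\P^3$ with a $G$-action, then the class of the $G$-torsor $F(X)$, or rather of a component of the relevant torsor under $J(X)$, lies in the image of classes coming from torsors over products of Jacobians of curves blown up in a $G$-equivariant factorization. Concretely, I would use $G$-equivariant weak factorization into blowups of smooth $G$-invariant centres. At each step the intermediate Jacobian changes by adding Jacobians of $G$-curves, which are principally polarized. The torsor data (a ``twisted Abel--Jacobi'' class in $H^1(G,J)$) is then shown to be trivial, i.e.\ to come from a $G$-fixed point, on the Jacobian-of-curves summands for $\P^3$ and its blowups. This is because $H^1(G,\cdot)$ of the torsors $\on{Pic}^d(C')$ attached to blown-up curves $C'$ is governed by the degree of $G$-invariant divisors, which become trivial after passing to $\P^3$, where everything comes from the linear action.

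Transferring along the factorization, and using uniqueness of the decomposition of a principally polarized abelian variety into indecomposables, the $J(C)$-summand must appear as one of these Jacobian factors with trivial torsor class. Therefore the $G$-torsor $F(X)$ has a $G$-fixed point, i.e.\ $X$ contains a $G$-invariant line.

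The main obstacle is this middle step. One must define the equivariant torsor class carefully; the ``Pic'' torsors attached to the codimension-$2$ cycles work well here, in the spirit of Hassett--Tschinkel's ``intermediate Jacobian torsor'' obstruction. One must also prove it is a $G$-birational invariant, modulo torsors coming from curves, in a way that pins down $F(X)$ rather than merely $\on{Pic}^1(C)$ twisted by something else. I would first try to identify $F(X)$ with the $G$-torsor $\on{Pic}^{1}$ or $\on{Pic}^3$ of curve classes of degree one on $X$, and check that the blowup formula keeps track of degrees modulo the right integer.
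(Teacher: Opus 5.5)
Your first direction is correct and is the paper's argument; read ``$3$-plane'' as a $3$-dimensional subspace of $k^6$ containing $W_\ell$, i.e.\ a projective plane through $\ell$.

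The converse has a genuine gap at the step you flag. You claim the torsor data on the Jacobian-of-curves summands is trivial because ``everything comes from the linear action on $\P^3$''. This is false. The linear action only governs $\on{CH}^2(\P^3)=\Z$. The Jacobian summands come from the blown-up curves $C'$, and the $G$-equivariant torsors $\on{Pic}^d(C')$ can be nontrivial. For example, let $G=\Z/3$ act on $\P^3$ by $(x:y:z:w)\mapsto(y:z:x:w)$, and let $C'=\{x^3+y^3+z^3=w=0\}$. Then $G$ acts freely on $C'$. Since $H^2(\Z/3,k^*)=0$, a $G$-invariant class of degree $1$ would descend to $C'/G$, which is impossible. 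So $Y=\on{Bl}_{C'}\P^3$ is projectively $G$-linear. Yet for $\alpha$ the class of a fibre of the exceptional divisor, $\on{CH}^2(Y)^{\alpha}\simeq\on{Pic}^1(C')$ is a nontrivial $G$-equivariant $J^3(Y)$-torsor.

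What the factorization argument actually yields is \Cref{bw-3.10}. After identifying $J(C)$ with $J^3(X)$, it gives $[F]=[\on{Pic}^d(C)]=d\,t$ in $H^1(G,J(C))$ for some $d\in\Z$, where $t=[\on{Pic}^1(C)]$. Even for this, uniqueness of the decomposition into indecomposables is not enough. You need Torelli to know that the $G$-equivariant isomorphism $J(B')\simeq J(C)$, for $B'$ the relevant component of the blown-up curves, is induced by a $G$-equivariant isomorphism of curves. Only then does it carry $\on{Pic}^d(B')$ to $\on{Pic}^d(C)$. In short, you conflate ``the class comes from a curve'' with ``the class vanishes''.

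The missing idea is the input that upgrades $[F]\in\Z t$ to $[F]=0$, and it is specific to intersections of two quadrics. Your $C$ is the curve of (rational equivalence classes of) conics on $X$. The inclusion $C\subset\on{CH}^2_{X/k}$ induces $G$-equivariant isomorphisms $J(C)\simeq J^3(X)$ and $\on{Pic}^1(C)\simeq(\on{CH}^2_{X/k})^2$, while $F\simeq(\on{CH}^2_{X/k})^1$ (lines have degree $1$, conics degree $2$). Hence $t=2[F]$. Also $2t=[\on{Pic}^2(C)]=0$, because the canonical class of the genus-$2$ curve $C$ is a $G$-invariant point of $\on{Pic}^2(C)$. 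Then $[F]=dt$ and $[F]=t-[F]=(1-d)t$. One of $d$ and $1-d$ is even, so $[F]=0$ and $F(k)^G\neq\emptyset$.

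This also shows your tentative identification of $F$ with $\on{Pic}^1(C)$ or $\on{Pic}^3(C)$ (equivalent torsors, via $K_C$) is wrong. Combined with $t=2[F]$, it would force $t=2t$, hence $t=0=[F]$, for every $G$-stable $X$. That fails, e.g., for $X=\{\sum x_i^2=\sum\lambda_i x_i^2=0\}$ with $G\subset\on{PGL}_6$ the image of the diagonal sign changes: its only invariant lines are coordinate lines, and none of them lies in $X$.
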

    \Cref{hassett-tschinkel} is an equivariant version of the following result: for a field $k$, a smooth complete intersection of two quadrics in $\mathbb{P}^5_k$ is rational if and only if it contains a line. This is due to Hassett and Tschinkel \cite{hassett2021rationality} when $k=\mathbb{R}$, and to Benoist and Wittenberg \cite[Theorem A]{benoist2023intermediate} for arbitrary $k$. The original proof of \Cref{hassett-tschinkel} reduces to this result by a twisting argument; see \cite[\S 8.2]{hassett2022equivariant}. In this note, we present a more direct proof of \Cref{hassett-tschinkel}, which instead proceeds by adapting the arguments of \cite{benoist2023intermediate} to the $G$-equivariant context. 
    
    The structure of our proof of \Cref{hassett-tschinkel} is as follows. If $X$ contains a $G$-invariant line $L$, then by projection from $L$ we see that $X$ is projectively $G$-linear. For the converse, we first derive in \Cref{bw-3.10} a necessary condition for the projective $G$-linearity of a rational smooth projective $G$-variety of dimension $3$ by considering $G$-equivariant torsors under its intermediate Jacobian (this is a $G$-equivariant analogue of \cite[Proposition 3.10]{benoist2023intermediate}). For a smooth complete intersection $X$ of two quadrics in $\mathbb{P}^5_k$, we make this condition explicit in terms of the Fano variety $F$ of lines in $X$, which is a $G$-equivariant torsor under the intermediate Jacobian $J^3_{X/k}$ of $X$ (see \Cref{bw-4.5}, which is a $G$-equivariant analogue of \cite[Theorem 4.5]{benoist2023intermediate}). When $X$ is projectively $G$-linear, this implies that $F$ is trivial as a $G$-equivariant $J^3_{X/k}$-torsor, and hence that $F(k)^G\neq\emptyset$, i.e., that $X$ contains a $G$-invariant line.

	\subsection*{Acknowledgments}
	This note was written in July 2021, while the author was a PhD student visiting Universit\'e Paris-Saclay in Orsay. It was inspired by a lecture of Brendan Hassett delivered at the online conference held on the occasion of Zinovy Reichstein's 60th birthday, where \Cref{hassett-tschinkel} was stated and the proof given in \cite{hassett2022equivariant} was sketched. A closely related proof has subsequently appeared in a paper of Ciurca, Tanimoto and Tschinkel \cite{ciurca2024intermediate}. 
    
    I thank Olivier Wittenberg for useful discussions on the subject and for helpful comments on this text. I also thank Ivan Cheltsov and Yuri Tschinkel for their interest in this note and for encouraging me to publish it. I thank the anonymous referee for a careful reading of the manuscript.
	
	\section{Preliminaries}

	\subsection{Equivariant torsors}
	
	Let $G$ be a finite group, and let $A$ be a $G$-module, that is, an abelian group with a left $G$-action. We use multiplicative notation for the group operations of $G$ and $A$, and we denote the $G$-action on $A$ by $(g,a)\mapsto g(a)$. We write $H^1(G,A)$ for the group of $1$-cocycles, i.e., functions $a\colon G\to A$ such that $a_{gh}=a_gg(a_h)$, modulo $1$-coboundaries, that is, functions $G\to A$ of the form $g\mapsto g(a)a^{-1}$ for some fixed $a\in A$. 
	
	A $G$-equivariant $A$-torsor is a non-empty $G$-set $P$ together with a $G$-equivariant left $A$-action (i.e. $g(ax)=g(a)g(x)$ for all $g\in G$, $a\in A$ and $x\in P$) such that the  map $A\times P\to P\times P$ given by $(a,x)\mapsto (ax,x)$ is a bijection. We use round brackets to denote $G$-actions, and juxtaposition to denote $A$-actions.
	
	If $P$ is a $G$-equivariant $A$-torsor, fix $x\in P$ and define the function $a\colon G\to A$ so that $g(x)=a_gx$ for all $g\in G$. Then $a$ is a $1$-cocycle, and defines a class $[P]\in H^1(G,A)$ which does not depend on the choice of $x$. 
	
	Given a $1$-cocycle $a\colon G\to A$, take $P\coloneqq A$ as a left $A$-module, and let $G$ act on $P$ by $g[x]\coloneqq a_gg(x)$. This defines an action, because for all $g,h\in G$ and all $x\in P$ we have
	\[g[h[x]]=a_gg(a_hh(x))=a_gg(a_h)g(h(x))=a_{gh}g(h(x))=(gh)[x].\]
	One checks that equivalent $1$-cocycles give rise to isomorphic $G$-equivariant $A$-torsors, and that the two constructions give rise to bijections \begin{equation}\label{h1-torsors}
	    H^1(G,A)\simeq \set{\text{Isom. classes of $G$-equiv. $A$-torsors}}
	\end{equation}
	that are inverse to each other. The class of the trivial cocycle (i.e. $a_g=1_A$ for all $g\in G$) corresponds to the isomorphism class of the trivial $G$-equivariant $A$-torsor, that is, the left $G$-module $A$, viewed as an $A$-torsor via multiplication on the left.
	
	\begin{lemma}\label{trivial}
		Let $P$ be a $G$-equivariant $A$-torsor. Then $P$ is trivial if and only if $P^G\neq \emptyset$.
	\end{lemma}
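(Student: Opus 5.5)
The plan is to prove both implications directly, using either the explicit description of the trivial torsor or the cocycle correspondence \eqref{h1-torsors}; the cleanest route is via cocycles.

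For the forward direction, suppose $P$ is trivial, i.e.\ there is an isomorphism of $G$-equivariant $A$-torsors $\phi\colon A\to P$, where $A$ carries its own $G$-module action and left multiplication. Since $1_A$ is fixed by every $g\in G$ (as $G$ acts by group automorphisms), $G$-equivariance of $\phi$ gives $g(\phi(1_A))=\phi(g(1_A))=\phi(1_A)$. Hence $\phi(1_A)\in P^G$, and in particular $P^G\neq\emptyset$.

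For the converse, pick $x\in P^G$ and use it as the base point in the construction of $[P]$. The cocycle $a$ is defined by $g(x)=a_gx$, and since $g(x)=x$, freeness of the $A$-action (the bijectivity of $A\times P\to P\times P$) forces $a_g=1_A$ for all $g$. So $[P]$ is the class of the trivial cocycle, and by the bijection \eqref{h1-torsors} $P$ is isomorphic to the trivial torsor. If one prefers to avoid invoking \eqref{h1-torsors}, one can instead check directly that $\phi\colon A\to P$, $b\mapsto bx$, is an isomorphism. It is bijective because the action is simply transitive, it is $A$-equivariant by associativity of the action, and it is $G$-equivariant because
\[g(bx)=g(b)g(x)=g(b)x=\phi(g(b)).\]

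I do not expect any real obstacle. The only point needing care is the freeness step $a_gx=x\Rightarrow a_g=1_A$, which follows from injectivity of $(a,x)\mapsto(ax,x)$. One should also note that the class $[P]$ is independent of the chosen base point, which the paper has already recorded, so choosing $x\in P^G$ is legitimate.
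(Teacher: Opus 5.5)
Your proposal is correct and matches the paper's proof: you use $g(1_A)=1_A$ to get a fixed point in $P$, and for the converse you take the orbit map $b\mapsto bx$ for $x\in P^G$ and check it is bijective and $G$-equivariant, exactly as the paper does. Your alternative cocycle argument through \eqref{h1-torsors} (the cocycle attached to a fixed base point is trivial) is also valid, but it is not needed.
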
 
	
	\begin{proof}
		Since $G$ acts on $A$ via group automorphisms, $A^G$ contains $1_A$, thus if $P$ is trivial then $P^G\neq \emptyset$. If $x_0\in P^G$, define a map from the trivial torsor $A$ to $P$ by sending $a\mapsto ax_0$. One easily checks that this is a $G$-equivariant map of $A$-torsors, and that it is bijective.
	\end{proof}
	
	\begin{rmk}
		Let $\mc{T}$ be a topos, and let $A$ be an abelian group object in $\mc{T}$. Then there is a notion of $A$-torsor, and $A$-torsors are classified by the sheaf cohomology group $H^1(\mc{T},A)$. This recovers the previous definition by letting $\mc{T}$ be the topos of $G$-sets, that is, the category of $G$-sets where coverings are jointly surjective maps of $G$-sets. \Cref{trivial} then follows from the fact that an $A$-torsor is trivial if and only if it has a global section.
	\end{rmk}

    Suppose given a short exact sequence of $G$-modules
    \begin{equation}\label{a-b-c}
        0 \to A \xrightarrow{\iota} B \xrightarrow{\pi} C \to 0.
    \end{equation}
    Then $A$ acts on $B$ by translation. For every $c\in C$, the fiber $\pi^{-1}(c)$ is an $A$-torsor; if $c\in C^G$, the fiber $\pi^{-1}(c)$ is a $G$-equivariant $A$-torsor. We have an exact sequence 
    \[0 \to A^G\xrightarrow{\iota} B^G\xrightarrow{\pi} C^G\xrightarrow{\partial}H^1(G,A)\]
    where the connecting homomorphism $\partial$ is defined as follows: for every $c\in C^G$, let $b\in \pi^{-1}(c)$, and for every $g\in G$ let $a_g\in A$ be the unique element such that $\iota(a_g)=g(b)b^{-1}$. For all $g,h\in G$, we have
    \[\iota(a_{gh})=(gh)(b)b^{-1}=g(h(b)b^{-1})g(b)b^{-1}=\iota\bigl(g(a_h)\bigr)\iota(a_g),\]
and hence, as $\iota$ is injective, $a_{gh}=g(a_h)a_g=a_gg(a_h)$. It follows that the function $G\to A$ given by $g\mapsto a_g$ is a $1$-cocycle. By definition, $\partial(c)$ is the class of this cocycle in $H^1(G,A)$.

\begin{lemma}\label{fiber=cocycle}
    Suppose given a short exact sequence of $G$-modules \eqref{a-b-c}, and let $\partial\colon C^G\to H^1(G,A)$ be the corresponding connecting map. For every $c\in C^G$, the class of the $G$-equivariant $A$-torsor $\pi^{-1}(c)$ under the bijection of \eqref{h1-torsors} is equal to $\partial(c)$.
\end{lemma}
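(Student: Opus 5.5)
The plan is to compute the class of $\pi^{-1}(c)$ directly from its definition and compare it with the cocycle defining $\partial(c)$. First, $\pi^{-1}(c)$ is a $G$-equivariant $A$-torsor. It is non-empty because $\pi$ is surjective. The $A$-action is $a\cdot b\coloneqq \iota(a)b$. This action is simply transitive on the fiber by exactness of \eqref{a-b-c}: two elements of $\pi^{-1}(c)$ differ by an element of $\ker\pi=\iota(A)$, and $\iota$ is injective. The fiber is $G$-stable because $c\in C^G$ and $\pi$ is $G$-equivariant. The action is $G$-equivariant because $\iota$ is, since $g(\iota(a)b)=\iota(g(a))g(b)$.

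Next I would compute the class $[\pi^{-1}(c)]$ using the recipe preceding \eqref{h1-torsors}. Choose the base point $x\coloneqq b\in\pi^{-1}(c)$, which is the same $b$ used in the definition of $\partial$; this is allowed because the class is independent of the base point. The cocycle attached to the torsor is determined by $g(b)=a'_g\cdot b=\iota(a'_g)b$, that is, $\iota(a'_g)=g(b)b^{-1}$. This is exactly the defining property of $a_g$ in the construction of $\partial(c)$, so by injectivity of $\iota$ we get $a'_g=a_g$ for every $g\in G$. The two cocycles therefore coincide on the nose, and their classes in $H^1(G,A)$ agree.

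Finally, the class of $\partial(c)$ does not depend on the choice of $b$. This is implicit in the definition of $\partial$, but it can be checked directly: replacing $b$ by $\iota(a)b$ changes the cocycle by the coboundary $g\mapsto g(a)a^{-1}$. In any case it also follows from the well-definedness of $[P]$ combined with the identity above.

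I do not expect a real obstacle. The only point needing care is to match the conventions for the torsor action (left translation through $\iota$) and for the cocycle, $g(x)=a_gx$ versus $\iota(a_g)=g(b)b^{-1}$, so that no inverse or twist appears. With the conventions fixed as in the paper, these agree verbatim.
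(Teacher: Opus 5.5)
Your proposal is correct and is essentially the paper's proof: take the base point $b$ used to define $\partial(c)$, note that the torsor cocycle is characterized by $g(b)=\iota(a_g)b$, i.e.\ $\iota(a_g)=g(b)b^{-1}$, and observe that this is verbatim the cocycle defining $\partial(c)$. Your extra checks, that $\pi^{-1}(c)$ is a $G$-equivariant $A$-torsor and that the class is independent of $b$, are correct; the paper simply takes them for granted.
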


\begin{proof}
    Let $c\in C^G$. Recall that we view $\pi^{-1}(c)$ as a $G$-equivariant $A$-torsor via the translation action of $A$. Fix $b\in \pi^{-1}(c)$. By definition, the $1$-cocycle associated with the $G$-equivariant $A$-torsor $\pi^{-1}(c)$ and the base point $b\in \pi^{-1}(c)$ is the unique function $a\colon G\to A$ such that $g(b)=\iota(a_g)b$ for every $g\in G$. This is equivalent to $\iota(a_g)=g(b)b^{-1}$ for every $g\in G$. Thus $a\colon G\to A$ is precisely the $1$-cocycle used in the definition of the connecting map $\partial(c)$. Therefore the class of the torsor $\pi^{-1}(c)$ in $H^1(G,A)$ is exactly $\partial(c)$.
\end{proof}

	\subsection{Jacobians of curves}
	We refer the reader to \cite[\S 2.1]{benoist2020clemens} for the basic definitions on principally polarized abelian varieties.
	
	Let $k$ be an algebraically closed field, and let $C$ be a smooth projective curve over $k$. We denote by $\on{Pic}_{C/k}$ the Picard scheme of $C$ and by $J_{C/k}=\on{Pic}_{C/k}^0$ the Jacobian of $C$. We view $J_{C/k}$ as a principally polarized abelian variety, with polarization given by the class of the theta divisor in the N\'eron-Severi group $\on{NS}(J_{C/k})$. We also let $\on{Pic}(C)\coloneqq \on{Pic}_{C/k}(k)$ and $J(C)\coloneqq J_{C/k}(k)$. We have a short exact sequence of commutative group schemes
	\begin{equation}\label{jac-pic}0\to J_{C/k}\to \on{Pic}_{C/k}\to \on{NS}_{C/k}\to 0.
	\end{equation}
	Let $G$ be a finite group. If $C$ is a $G$-curve, then $G$ also acts on $\on{Pic}_{C/k}$ via group scheme automorphisms, and the induced $G$-action on $J_{C/k}$ respects the theta divisor, hence the polarization.
	
	Assume now that the smooth projective curve $C$ is connected. Then $\on{NS}_{C/k}=\Z$, and (\ref{jac-pic}) reduces to
	\[0\to J_{C/k}\to \on{Pic}_{C/k}\xrightarrow{\on{deg}}\Z \to 0.\]
	For every $d\in \Z$, we let $\on{Pic}_{C/k}^d$ be the connected component of $\on{Pic}_{C/k}$ which parametrizes classes of divisors of degree $d$, and we set $\on{Pic}^d(C)\coloneqq \on{Pic}^d_{C/k}(k)$. If $C$ is a connected $G$-curve, passing to $k$-points and then taking $G$-invariants yields a connecting homomorphism \[\Z\to H^1(G,J(C)),\] which by \Cref{fiber=cocycle} sends $d\in \Z$ to $[\on{Pic}^d(C)]=d[\on{Pic}^1(C)]$. 
	
	\begin{lemma}\label{dec-jac}
		Let $k$ be an algebraically closed field, let $G$ be a finite group, let $C$ be a smooth projective $G$-curve over $k$, let $A\subset J_{C/k}$ be a principally polarized abelian subvariety, and assume that $A$ is $G$-invariant. Then $A=J_{C'/k}$, where $C'\subset C$ is a (possibly empty) $G$-invariant union of connected components of $C$ of positive genus.
	\end{lemma}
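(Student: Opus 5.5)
Write $C=\bigsqcup_{i\in I} C_i$ as the disjoint union of its connected components, so that $J_{C/k}=\prod_{i\in I} J_{C_i/k}$ as principally polarized abelian varieties (product polarization). The statement then has two parts: a non-equivariant decomposition result, and a compatibility with the $G$-action. First I will show that any principally polarized abelian subvariety $A\subset J_{C/k}$ equals $\prod_{i\in I'} J_{C_i/k}$ for some subset $I'\subset I$ of indices with $g(C_i)>0$. Then I will use $G$-invariance of $A$ to show that $I'$ is stable under the permutation action of $G$ on $I$. This gives $A=J_{C'/k}$ with $C'=\bigsqcup_{i\in I'}C_i$ a $G$-invariant union of components of positive genus.

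\textbf{Step 1: the non-equivariant statement.}
\begin{itemize}
\item Since $A$ is principally polarized (with the polarization induced from $J_{C/k}$), the standard splitting result applies (see \cite[\S 2.1]{benoist2020clemens}). It gives a decomposition $J_{C/k}=A\times A^\perp$ of principally polarized abelian varieties, where $A^\perp$ is the complementary abelian subvariety.
\item By the uniqueness of the decomposition of a principally polarized abelian variety into indecomposable factors, the factors of $A$ form a subset of the indecomposable factors of $J_{C/k}$.
\item The Jacobian of a smooth projective connected curve of positive genus is indecomposable as a principally polarized abelian variety, because its theta divisor is irreducible.
\item Hence the indecomposable factors of $J_{C/k}$ are exactly the $J_{C_i/k}$ with $g(C_i)>0$, and $A=\prod_{i\in I'}J_{C_i/k}$ for some set $I'$ of such indices.
\item One point needs care: uniqueness must hold as actual sub-abelian varieties, not just up to isomorphism. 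This is part of the standard theorem (Debarre, or Clemens--Griffiths style): the indecomposable factors are determined as subvarieties by the irreducible components of the theta divisor. Distinct components may still have isomorphic Jacobians; that is harmless, since the factors are determined as subvarieties anyway.
\end{itemize}

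\textbf{Step 2: $G$-stability of $I'$.}
\begin{itemize}
\item Each $g\in G$ permutes the components $C_i$ and maps $J_{C_i/k}$ isomorphically onto $J_{C_{g(i)}/k}$.
\item Since $g(A)=A$, the subvariety $\prod_{i\in I'} J_{C_{g(i)}/k}=g(A)$ coincides with $A=\prod_{i\in I'} J_{C_i/k}$.
\item Comparing factors using the subvariety uniqueness from Step 1, we get $g(I')=I'$.
\item The case $I'=\emptyset$ gives $A=0$ and $C'=\emptyset$, which the statement allows.
\end{itemize}

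The main obstacle is Step 1, specifically the subvariety-level uniqueness of the indecomposable decomposition. If I cannot cite it directly, I would prove it as follows. Let $A_j$ be an indecomposable principally polarized factor of $A$. Its theta divisor pulls back to an irreducible component of $\Theta_{J_C}=\sum_i \mathrm{pr}_i^*\Theta_{C_i}$, where each $\mathrm{pr}_i^*\Theta_{C_i}$ is irreducible. Matching these components forces $A_j=J_{C_i/k}$ for some $i$ with $g(C_i)>0$.
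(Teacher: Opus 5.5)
Your proposal is correct and follows the same route as the paper: you identify the $J_{C_i/k}$ of the positive-genus components as the indecomposable principally polarized factors of $J_{C/k}$, write $A$ as the product over a subset of them, and deduce that this subset is $G$-stable from the $G$-invariance of $A$. Your attention to uniqueness of the decomposition as actual subvarieties (via the irreducible components of the theta divisor) spells out a point the paper uses without comment.
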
 
	
	\begin{proof}
		Let $C_1,\dots,C_n\subset C$ be the connected components of $C$ of positive genus. The inclusions $C_i\subset C$ induce embeddings of principally polarized abelian varieties $J_{C_i/k}\subset J_{C/k}$. The $J_{C_i/k}$ are all the indecomposable  factors of $J_{C/k}$, and so there exists $\mc{O}\subset\set{1,\dots,n}$ such that $A=\prod_{i\in \mc{O}}J_{C_i/k}$ as principally polarized abelian subvarieties of $J_{C/k}$. 
		
		The group $G$ acts on $\set{1,\dots,n}$ in the following way: for every $g\in G$ and $i,j\in\set{1,\dots,n}$, we set $g(i)=j$ if $g(C_i)=C_j$. Since $A$ is $G$-invariant, $\mc{O}$ is $G$-invariant. Let $C'\subset C$ be the union of the $C_i$ for $i\in \mc{O}$. Then $A=J_{C'/k}$.
	\end{proof}

	\subsection{Intermediate Jacobians}\label{subsec:i-j}
	Let $k$ be an algebraically closed field, and let $X$ be a rational smooth projective threefold over $k$. We denote by $J^3_{X/k}$ the intermediate Jacobian of $X$, and we let $J^3(X)\coloneqq J^3_{X/k}(k)$. We view $J^3_{X/k}$ as a principally polarized abelian variety, with polarization given by \cite[Property 2.4, Corollary 2.8]{benoist2020clemens}; see also \cite[\S 1.6]{benoist2019intermediate}. We have a short exact sequence of abelian groups
	\begin{equation}\label{intjac-ch2}0\to J^3(X)\to \on{CH}^2(X)\to \on{NS}^2(X)\to 0,
	\end{equation}
	where $\on{NS}^2(X)$ is a free $\Z$-module of finite rank. In particular, $\on{CH}^2(X)$ is represented by a smooth group scheme $\on{CH}^2_{X/k}$ over $k$. Thus we have a short exact sequence of commutative group schemes
	\begin{equation}\label{intjac-ch2'}0\to J^3_{X/k}\to \on{CH}^2_{X/k}\to NS_{X/k}^2\to 0,
	\end{equation}
	where $NS_{X/k}^2$ is the constant group scheme associated to $\on{NS}^2(X)$. 
    
    Let $G$ be a finite group acting on $X$ over $k$. Then $G$ naturally acts on $\on{CH}^2_{X/k}$ via group scheme automorphisms and the sequences (\ref{intjac-ch2}) and (\ref{intjac-ch2'}) become $G$-equivariant. Since cup-product on $H^*(X,\Z_{\ell})$ commutes with pullbacks, it is $G$-invariant. Since the polarization of $J^3_{X/k}$ given in \cite[Property 2.4]{benoist2020clemens} is uniquely determined by the cup-product, it is also $G$-invariant. Thus $G$ acts on $J^3_{X/k}$ via automorphisms of polarized abelian varieties.
	
	Passing to group cohomology in (\ref{intjac-ch2}) yields a connecting homomorphism \[\on{NS}^2(X)^G\to H^1(G,J^3(X)),\] which by \Cref{fiber=cocycle} sends $\alpha\in \on{NS}^2(X)^G$ to $[(\on{CH}^2(X))^{\alpha}]$, the isomorphism class of the coset $\alpha+J^3(X)$.

	\section{A criterion for projective linearity}    
	Let $k$ be an algebraically closed field of characteristic zero, and let $G$ be a finite group. Recall that a $G$-variety $X$ is said to be \emph{projectively $G$-linear} if there exist a group homomorphism $\rho\colon G\to \on{PGL}_{n+1}$ and a $G$-equivariant birational equivalence $X\simeq \P^n_k$, such that $G$ acts on $\P^n_k$ via $\rho$. 
    
    Let $X$ be a projectively $G$-linear smooth projective threefold over $k$. In particular, $X$ is rational, and so it admits an intermediate Jacobian $J^3_{X/k}$. Since $\on{char}(k)=0$, by functorial resolution of indeterminacies there exists a diagram of smooth projective $G$-varieties
	\begin{equation}\label{abhyankar}
		X\xleftarrow{h} X'=Y_{n+1}\to\cdots \to Y_{j+1}\xrightarrow{p_j}Y_j\to\cdots \to Y_1=Y=\P^3_k	
	\end{equation}
	such that $p_j$ is the blow-up of a smooth $G$-invariant closed subscheme $Z_j\subset Y_j$ of codimension $c_j$ and $h$ is projective, birational and $G$-equivariant. Let
	\begin{equation}\label{ch2-decomp}
		\on{CH}^2_{X/k}\times \mc{G}\simeq \on{CH}^2_{\P^3_k/k}\times \prod_{c_j=2}\on{Pic}_{Z_j/k}\times \prod_{c_j=3} \Z_{Z_j/k}
	\end{equation}
	be the isomorphism of group schemes over $k$ constructed in \cite[(3.9)]{benoist2019intermediate} starting from (\ref{abhyankar}). Inspection of the construction of \cite[(3.9)]{benoist2019intermediate} shows that $G$ naturally acts on the group scheme $\mc{G}$ through group automorphisms and that (\ref{ch2-decomp}) is $G$-equivariant. 
	
	\begin{prop}\label{bw-3.1}
		Let $X$ be a projectively $G$-linear smooth projective threefold over an algebraically closed field $k$ of characteristic zero. There exist a smooth projective $G$-curve $B$ over $k$ and a $G$-equivariant embedding $\on{CH}^2_{X/k}\hookrightarrow \on{Pic}_{B/k}$ which identifies $\on{CH}^2_{X/k}$ with a $G$-invariant direct factor of $\on{Pic}_{B/k}$ and such that the induced embedding $J^3_{X/k}\hookrightarrow J_{B/k}$ is a morphism of principally polarized abelian varieties.
	\end{prop}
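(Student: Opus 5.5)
We will follow the proof of \cite[Proposition 3.1]{benoist2023intermediate}, checking at each step that the constructions are $G$-equivariant. The starting point is the $G$-equivariant isomorphism (\ref{ch2-decomp}) obtained from the diagram (\ref{abhyankar}).

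First, set $B\coloneqq \bigsqcup_{c_j=2} Z_j$, where only the one-dimensional components of the $Z_j$ with $c_j=2$ are kept. Since $Y_1=\P^3_k$ and each $Z_j$ of codimension $2$ in the threefold $Y_j$ is a smooth curve, $B$ is a smooth projective curve. Each $Z_j$ is $G$-invariant, so $G$ acts on each $Z_j$ and hence on $B$. This gives $\on{Pic}_{B/k}=\prod_{c_j=2}\on{Pic}_{Z_j/k}$ with its natural $G$-action.

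Next we build the embedding. Since $\on{CH}^2_{\P^3_k/k}\simeq\Z$ and the factors $\Z_{Z_j/k}$ for $c_j=3$ are discrete, the isomorphism (\ref{ch2-decomp}) shows that $\on{CH}^2_{X/k}\times\mc{G}$ is $G$-equivariantly isomorphic to $\on{Pic}_{B/k}\times\Lambda$, where $\Lambda$ is a $G$-permutation lattice. To get an embedding into $\on{Pic}_{B/k}$ alone, we enlarge $B$ by adding copies of $\P^1$ permuted by $G$ in the same way $G$ permutes the points of the $Z_j$ with $c_j=3$, together with one $G$-fixed copy of $\P^1$ for the $\on{CH}^2_{\P^3_k/k}$ factor. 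The Picard scheme of this disjoint union of rational curves is exactly $\Z\times\prod_{c_j=3}\Z_{Z_j/k}$, with the correct $G$-action. Composing the inclusion of the first factor with (\ref{ch2-decomp}) then yields a $G$-equivariant closed embedding $\on{CH}^2_{X/k}\hookrightarrow\on{Pic}_{B/k}$. Its image is a direct factor with complement the image of $\mc{G}$, and this complement is $G$-invariant because $\mc{G}$ is a $G$-stable factor of (\ref{ch2-decomp}).

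It remains to check the polarization. Restricting to identity components gives a $G$-equivariant embedding $J^3_{X/k}\hookrightarrow J_{B/k}$. By \cite[(3.9)]{benoist2019intermediate} and \cite[Proposition 3.1]{benoist2023intermediate}, the nonequivariant isomorphism underlying (\ref{ch2-decomp}), restricted to identity components, is an isomorphism of principally polarized abelian varieties. Here $J^3$ of a blow-up along a curve $Z_j$ is $J^3\times J_{Z_j}$ as a product of principally polarized abelian varieties (the Clemens--Griffiths type formula). Hence the induced map is a morphism of principally polarized abelian varieties, and we do not need to redo this argument equivariantly.

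The main obstacle is justifying that (\ref{ch2-decomp}) and the splitting off of $\mc{G}$ are genuinely $G$-equivariant. In Benoist--Wittenberg, $\mc{G}$ arises from the Chow groups of the intermediate blow-ups and the pullbacks and pushforwards along $h$, and all of these maps are functorial. The paper asserts this equivariance just before the statement, so we take it as given; the only remaining check is that the identification of each blow-up contribution with $\on{Pic}_{Z_j/k}$ is compatible with the $G$-action on $Z_j$, which follows because the blow-up formula is natural with respect to automorphisms preserving the center.
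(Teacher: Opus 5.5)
Your proposal is correct and is essentially the paper's proof: the curve you end up with, $\P^1_k\sqcup\bigsqcup_{c_j=3}\P^1_{\pi_0(Z_j/k)}\sqcup\bigsqcup_{c_j=2}Z_j$, is exactly the paper's $B$. The embedding and its $G$-invariant complement are read off from the $G$-equivariant isomorphism (\ref{ch2-decomp}), and compatibility with polarizations is a non-equivariant property imported from Benoist--Wittenberg, which the paper also relies on implicitly. Your caveat about keeping only the one-dimensional components of the $Z_j$ with $c_j=2$ is unnecessary, since these centers have pure codimension $2$.
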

	
	\begin{proof}
		Let $B$ be the disjoint union of $\P^1_k$, the curves $\P^1_{\pi_0(Z_j/k)}$ for all $j$ such that $c_j=3$, and the curves $Z_j$ for all $j$ such that $c_j=2$. Here $\pi_0(Z_j/k)$ denotes the scheme of connected components of $Z_j$; see \cite[\S 1.3]{benoist2019intermediate}. Then $B$ is a smooth projective curve over $k$, and we have a $G$-equivariant isomorphism \[\on{Pic}_{B/k}\simeq \Z\times\prod_{c_j=2} \on{Pic}_{Z_j/k}\times \prod_{c_j=3}\Z_{Z_j/k}.\]
		The conclusion follows from (\ref{ch2-decomp}).
	\end{proof}

	\begin{prop}\label{bw-3.10}
		Let $X$ be a projectively $G$-linear smooth projective threefold over an algebraically closed field $k$ of characteristic zero. For every smooth projective connected $G$-curve $D$ of genus $\geq 2$ over $k$, every $G$-equivariant morphism $\psi\colon J^3_{X/k}\to J_{D/k}$ identifying $J_{D/k}$ with a principally polarized factor of $J^3_{X/k}$, and every $\alpha\in \on{NS}^2(X)^G$, there exists $d\in \Z$ such that $\psi_*[(\on{CH}^2(X))^{\alpha}]=[\on{Pic}^d(D)]$ in $H^1(G,J(D))$.	
	\end{prop}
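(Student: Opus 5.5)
We will follow the proof of \cite[Proposition 3.10]{benoist2023intermediate}, keeping track of the $G$-actions. First we apply \Cref{bw-3.1} to obtain a smooth projective $G$-curve $B$ and a $G$-equivariant embedding $\iota\colon \on{CH}^2_{X/k}\hookrightarrow \on{Pic}_{B/k}$. This embedding identifies $\on{CH}^2_{X/k}$ with a $G$-invariant direct factor of $\on{Pic}_{B/k}$, and it restricts to an embedding of principally polarized abelian varieties $J^3_{X/k}\hookrightarrow J_{B/k}$.

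The next step is to produce the curve $D$ inside $B$. Since $J_{D/k}$ is a principally polarized factor of $J^3_{X/k}$, it is a principally polarized factor of $J_{B/k}$. Concretely, we take the image of $J_{D/k}$ under the dual map $\psi^\vee=\psi^{-1}$ on the relevant factor, which is a $G$-invariant principally polarized abelian subvariety $A\subset J_{B/k}$, isomorphic to $J_{D/k}$. By \Cref{dec-jac}, $A=J_{C'/k}$ for a $G$-invariant union $C'$ of positive-genus components of $B$. Since $J_{D/k}$ is indecomposable as a principally polarized abelian variety (it is the Jacobian of a connected curve), $C'$ is a single component, which is therefore $G$-stable. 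By the Torelli theorem, $C'\simeq D$; since $D$ has genus $\geq 2$, we can arrange this isomorphism to be $G$-equivariant and compatible with $\psi$. Here we use that $\on{Aut}(D)\to\on{Aut}(J_D,\theta)$ is injective with image of index at most $2$ (the missing element being $-1$), and we modify the identification by $\pm 1$ if needed. This Torelli compatibility is the step I expect to be the main obstacle. The point is that the isomorphism $J_{C'}\simeq J_D$ induced by $\psi$ is $G$-equivariant and preserves polarizations, so by Torelli it equals $\pm f_*$ for a unique isomorphism $f\colon C'\to D$. Uniqueness then forces $f$ to be $G$-equivariant, because $\pm g_*$ commutes with $g$. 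The sign $-1$ is harmless: it sends $[\on{Pic}^d]$ to $[\on{Pic}^{-d}]$.

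Now $\psi$ equals, up to this identification and sign, the composition $J^3_{X/k}\hookrightarrow J_{B/k}\to J_{C'/k}$. This composition is the restriction of the $G$-equivariant homomorphism $\on{CH}^2_{X/k}\to \on{Pic}_{B/k}\to \on{Pic}_{C'/k}$ given by restriction to the component $C'$. The latter map sends $\on{CH}^2$ to $\on{Pic}$ compatibly with the exact sequences \eqref{intjac-ch2} and \eqref{jac-pic}. Taking $k$-points gives a morphism of short exact sequences of $G$-modules. It sends $\alpha\in\on{NS}^2(X)^G$ to some $d\in \on{NS}(C')=\Z$, and it maps the coset $\alpha+J^3(X)$ into $\on{Pic}^d(C')$.

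Finally, naturality of the connecting homomorphism, combined with \Cref{fiber=cocycle}, gives $\psi_*[(\on{CH}^2(X))^\alpha]=[\on{Pic}^d(C')]$. Transporting along $f$ yields $[\on{Pic}^{\pm d}(D)]$ in $H^1(G,J(D))$, as required.
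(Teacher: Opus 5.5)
Your proposal is correct and follows the paper's route. You embed via \Cref{bw-3.1}, use \Cref{dec-jac} and indecomposability to identify $J_{D/k}$ with the Jacobian of a single $G$-stable component, apply the precise Torelli theorem with its $\pm 1$ ambiguity (your uniqueness argument for the $G$-equivariance of $f$ is what the paper leaves implicit), and conclude by naturality of the connecting map.

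The one variation is that you compose the embedding $\on{CH}^2_{X/k}\hookrightarrow\on{Pic}_{B/k}$ with restriction to $C'$, instead of using the retraction $q$ and lifting $\alpha$ to some $\beta\in\on{NS}(B)^G$. This spares you the lifting step. You should, however, justify why $\psi$ factors through this restriction: $\ker\psi$ is the orthogonal complement of $\psi^\vee(J_{D/k})$ in $J^3_{X/k}$, and $\iota$ preserves polarizations, so $\iota(\ker\psi)$ is orthogonal to $J_{C'/k}$ and therefore lies in the kernel of restriction to $C'$.
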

	
	\begin{proof}
		Let $B$ be the $G$-curve given by  \Cref{bw-3.1}, let $q\colon \on{Pic}_{B/k}\to \on{CH}^2_{X/k}$ be the induced projection map, and let $q^0$ be the restriction of $q$ to $J_{B/k}$. The composite $\psi\circ q^0\colon J_{B/k}\to J_{D/k}$ is $G$-equivariant and exhibits $J_{D/k}$ as a $G$-invariant principally polarized direct factor of $J_{B/k}$. By \Cref{dec-jac}, there exist a $G$-invariant union $B'$ of connected components of $B$ of positive genus and a $G$-equivariant isomorphism $J_{B'/k}\simeq J_{D/k}$. Since $D$ is connected of genus $\geq 2$, its Jacobian $J_{D/k}$ is indecomposable of dimension $\geq 2$, hence so is $J_{B'/k}$, and therefore $B'$ is connected of genus $g\geq 2$. By the precise form of Torelli's Theorem \cite[Appendice, Th\'eor\`emes 1 et 2]{lauter2001geometric}, after possibly replacing $q$ by $-q$ if $D$ is not hyperelliptic (of course, if $q$ is a $G$-equivariant split surjection, so is $-q$), we may identify $B'$ and $D$ so that $\psi\circ q^0$ is the pullback of the composition $i\colon D\simeq B'\subset B$. Since $q$ realizes $\on{CH}^2_{X/k}$ as a $G$-invariant direct factor of $\on{Pic}_{B/k}$, the induced map $\cl{q}\colon \on{NS}(B)\to \on{NS}^2(X)$ is surjective and $G$-equivariantly split, and so induces a surjective map $\on{NS}(B)^G\to (\on{NS}^2(X))^G$. Let $\beta \in \on{NS}(B)^G$ be a lifting of $\alpha$, and let $d\coloneqq i^*\beta\in \on{NS}(D)\simeq \Z$. We have the following commutative diagram of $G$-equivariant maps and exact rows:
		\[
		\begin{tikzcd}
			0 \arrow[r] & J^3(X) \arrow[r] & \on{CH}^2(X) \arrow[r] & \on{NS}^2(X)\arrow[r] & 0 \\
			0 \arrow[r] & J(B) \arrow[r] \arrow[u, "q^0"] \arrow[d,"i^*"]& \on{Pic}(B) \arrow[r] \arrow[u,"q"] \arrow[d, "i^*"]& \on{NS}(B) \arrow[r] \arrow[u,"\cl{q}"] \arrow[d, "i^*"]& 0 \\
			0 \arrow[r] & J(D) \arrow[r]& \on{Pic}(D) \arrow[r]& \on{NS}(D) \arrow[r]& 0.   	
		\end{tikzcd}
		\]
		Since we have identified $\psi\circ q^0$ with the pullback of $i$, passing to group cohomology we obtain
		\[\psi_*[(\on{CH}^2(X))^{\alpha}]=(\psi\circ q^0)_*[\on{Pic}(B)^{\beta}]=[\on{Pic}^d(D)]\in H^1(G, J(D)).\qedhere\] 
	\end{proof}

	\section{Smooth complete intersections of two quadrics}

	\begin{prop}\label{bw-4.5}
		Let $X\subset \P^5_k$ be a smooth complete intersection of two quadrics, let $F$ be its variety of lines (i.e. the Hilbert scheme of lines in $\mathbb{P}^5_k$ which are contained in $X$), and let $Z\subset X\times F$ be the universal line. Let $G$ be a finite subgroup of $\on{PGL}_6$ such that $X$ is stable under the $G$-action. 
		\begin{enumerate}[label=(\roman*)]
			\item We have a $G$-equivariant short exact sequence of group schemes
			\[0\to J^3_{X/k}\to \on{CH}^2_{X/k}\xrightarrow{\delta}\Z \to 0,\]
			where $\delta$ is induced by the degree map $\deg \colon \on{CH}^2(X)\to \Z$. For every $n\in\Z$, we let $(\on{CH}^2_{X/k})^n\coloneqq \delta^{-1}(n)$. 
			\item We have a $G$-equivariant isomorphism $F\simeq (\on{CH}^2_{X/k})^1$.
			\item There exists a unique $G$-invariant reduced closed $k$-subscheme $D\subset \on{CH}^2_{X/k}$ such that $D(k)\subset \on{CH}^2(X)$ is the subset of classes of conics contained in $X$. The scheme $D$ is a smooth projective connected curve of genus $2$ over $k$.
			\item The inclusion $D\subset \on{CH}^2_{X/k}$ induces a $G$-equivariant isomorphism of principally polarized abelian varieties \[J_{D/k}\simeq J^3_{X/k}\] and, after identifying $J_{D/k}$ and $J^3_{X/k}$ via this isomorphism, a $G$-equivariant isomorphism of $G$-equivariant torsors \[\on{Pic}^1_{D/k}\simeq (\on{CH}^2_{X/k})^2.\]
		\end{enumerate}	
	\end{prop}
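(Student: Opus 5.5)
The plan is to deduce everything from \cite[Theorem 4.5]{benoist2023intermediate} (and the classical geometry it rests on), which gives (i)--(iv) non-equivariantly over an algebraically closed field. What remains is to check that each object and map there is defined intrinsically in terms of $X\subset\P^5_k$, so that it is automatically compatible with any automorphism of $\P^5_k$ preserving $X$, in particular with the action of $G\subset\on{PGL}_6$.

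For (i), I would start from the sequence \eqref{intjac-ch2'}. For a smooth complete intersection of two quadrics in $\P^5_k$, one has $\on{NS}^2(X)\simeq\Z$, generated by the class of a line, and the isomorphism is given by the degree against the hyperplane class $h$. Since $G$ acts through $\on{PGL}_6$, it preserves $h$, and pullback along $g$ preserves the degree $\deg(\gamma\cdot h)$. Hence $\delta$ is $G$-invariant, so $G$ acts trivially on $\Z$, and the sequence is $G$-equivariant by \Cref{subsec:i-j}.

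For (ii), the universal line $Z\subset X\times F$ defines, via the cycle class of the fibers, a morphism $F\to \on{CH}^2_{X/k}$ with image in degree $1$. This map is $G$-equivariant because $Z$ is $G$-invariant under the diagonal action; indeed $g$ sends a line in $X$ to a line in $X$. By \cite[Theorem 4.5]{benoist2023intermediate} this morphism is an isomorphism onto $(\on{CH}^2_{X/k})^1$, and equivariance is therefore inherited.

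For (iii), the key point is uniqueness. A reduced closed subscheme of a variety over the algebraically closed field $k$ is determined by its $k$-points. The set of classes of conics in $X$ is $G$-stable, because $G$ preserves degree and sends conics to conics. For each $g\in G$, the translate $g(D)$ is again reduced and closed with the same $k$-points, so $g(D)=D$. Existence, and the fact that $D$ is a smooth connected genus $2$ curve (the classical curve associated to the pencil of quadrics), again come from \cite{benoist2023intermediate}.

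For (iv), I would consider the Abel--Jacobi-type map $\on{Pic}_{D/k}\to\on{CH}^2_{X/k}$ induced by the inclusion $D\subset\on{CH}^2_{X/k}$. Concretely, a divisor $\sum n_i[x_i]$ is sent to $\sum n_i x_i$, using the group law of $\on{CH}^2_{X/k}$. This map commutes with $G$, because $G$ acts on $\on{CH}^2_{X/k}$ by group automorphisms and preserves $D$. It sends $\on{Pic}^1_{D/k}$ to $(\on{CH}^2_{X/k})^2$ and $J_{D/k}$ to $J^3_{X/k}$. By \cite{benoist2023intermediate} the restriction to $J_{D/k}$ is an isomorphism of principally polarized abelian varieties; equivariance is then formal. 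The induced map $\on{Pic}^1_{D/k}\to(\on{CH}^2_{X/k})^2$ is compatible with the two actions along this isomorphism, so it is an isomorphism of $G$-equivariant torsors.

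The main obstacle is making sure that the non-equivariant maps in \cite{benoist2023intermediate} are the canonical ones described above, not isomorphisms that depend on choices. For instance, a choice of base point or a sign in the identification of the polarization would destroy equivariance. I would handle this by re-deriving each map directly from the universal families (lines, conics) and the group law, and by invoking \cite{benoist2023intermediate} only for the bijectivity statements.
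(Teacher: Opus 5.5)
Your proposal is correct and follows essentially the same route as the paper. Both invoke Benoist--Wittenberg's Theorem 4.5 non-equivariantly, then observe that each object and map there is defined intrinsically in terms of $X\subset\P^5_k$ and therefore commutes with the $G$-action: the degree map, the morphism $F\to(\on{CH}^2_{X/k})^1$ induced by the $G$-invariant universal line $Z$, the curve $D$ pinned down by uniqueness, and the map induced by $D\subset\on{CH}^2_{X/k}$. You also spell out details the paper leaves implicit, namely the trivial $G$-action on $\Z$ and the explicit Albanese-type description of $\on{Pic}_{D/k}\to\on{CH}^2_{X/k}$, and these are fine.
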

	Taking $k$-points and passing to group cohomology in (i) gives a connecting homomorphism $\Z\to H^1(G,J^3_{X/k}(k))$ which sends $n\in \Z$ to $[(\on{CH}^2_{X/k})^n(k)]$; see \S\ref{subsec:i-j}.
    
	\begin{proof}
		(i) This is \cite[Theorem 4.5(i)]{benoist2019intermediate}.
		
		(ii) By the universal property of the Hilbert scheme, the group $G$ acts on $F$, and the diagonal $G$-action on $X\times F$ leaves $Z$ invariant. By \cite[Theorem 4.5(ii)]{benoist2019intermediate}, we have an isomorphism $F\simeq (\on{CH}^2_{X/k})^1$ which is induced by $[\mc{O}_Z]\in K_0(X_F)$ and the universal property of $\on{CH}^2_{X/k}$. Since $Z$ is $G$-invariant, this isomorphism is $G$-equivariant.
		
		(iii) Existence and uniqueness of $D$, as well as the fact that it is a smooth projective curve of genus $2$, are proved in  \cite[Theorem 4.5(iii)]{benoist2019intermediate}. The $G$-invariance immediately follows from the uniqueness and the fact that $G$ acts on $\P^5_k$ via $\on{PGL}_6$, and so the $G$-action sends conics to conics.
		
		(iv) As $D$ is $G$-invariant, the isomorphisms $J_{D/k}\simeq J^3_{X/k}$ and $\on{Pic}^1_{D/k}\simeq (\on{CH}^2_{X/k})^2$ defined in \cite[Theorem 4.5(iv)]{benoist2019intermediate} are $G$-equivariant.
	\end{proof}
	
	\begin{proof}[Proof of \Cref{hassett-tschinkel}]
		Assume that $X$ contains a $G$-invariant line $L$. If $V$ is the variety of planes of $\P^5_k$ containing $L$, then projecting from $L$ gives a $G$-equivariant birational morphism $X\setminus L\to V$. Since $V$ is isomorphic to $\P^3_k$, we conclude that $X$ is projectively $G$-linear, as desired. 	
		
		Assume now that $X$ is projectively $G$-linear. Let $D$ be the $G$-curve of \Cref{bw-4.5}(iii), and let $\psi\colon  J^3_{X/k}\xrightarrow{\sim} J_{D/k}$ be the inverse of the isomorphism of \Cref{bw-4.5}(iv). By \Cref{bw-4.5}(i), we have $\on{NS}^2(X)=\Z$ with trivial $G$-action. By \Cref{bw-3.10} (applied to $\alpha=1$) we have \[\psi_*[(\on{CH}^2_{X/k})^1(k)]=[\on{Pic}^d(D)]\in H^1(G,J(D)).\] By \Cref{bw-4.5}(iv), \[[\on{Pic}^1(D)]=2\psi_*[(\on{CH}^2_{X/k})^1(k)]\in H^1(G,J(D)).\] Combining these two equalities, we obtain 
		\begin{equation}\label{combine}
			\psi_*[(\on{CH}^2_{X/k})^1(k)]=[\on{Pic}^d(D)]=[\on{Pic}^{1-d}(D)]\in H^1(G,J(D)).
		\end{equation}
		Since $D$ has genus $2$, the canonical class $K_D$ belongs to $\on{Pic}^2(D)^G$, hence by \Cref{trivial} the $G$-equivariant $J(D)$-torsor $\on{Pic}^2(D)$ is trivial. As one of $d$ and $1-d$ is even, it follows from (\ref{combine}) that $\psi_*[(\on{CH}^2_{X/k})^1(k)]=0$ in $H^1(G,J(D))$. Since $\psi$ is an isomorphism, we deduce that the $G$-equivariant $J^3(X)$-torsor $(\on{CH}^2_{X/k})^1(k)$ is trivial. By \Cref{trivial}, this implies that $(\on{CH}^2_{X/k})^1(k)$ has a $G$-invariant element. By \Cref{bw-4.5}(ii), we have a $G$-equivariant bijection $(\on{CH}^2_{X/k})^1(k)\simeq F(k)$, and hence $F(k)^G\neq \emptyset$, that is, $X$ contains a $G$-invariant line.  
	\end{proof}

    \begin{rmk}
    The following remark complements \Cref{bw-4.5}(ii). The $G$-variety $F$ is naturally a $G$-equivariant torsor under its Albanese variety $\on{Alb}^0_{F/k}$. By \cite[Theorem 4.5(ii)]{benoist2019intermediate}, the class $[\mc{O}_Z]\in K_0(X_F)$ induces a $G$-equivariant isomorphism $\on{Alb}^0_{F/k}\simeq J^3_{X/k}$, compatible with the $G$-equivariant isomorphism $F\simeq (\on{CH}^2_{X/k})^1$ of \Cref{bw-4.5}(ii). Equivalently, we have a commutative square of $G$-equivariant maps
    \[
    \begin{tikzcd}
        \on{Alb}^0_{F/k}\times F \arrow[r] \arrow[d, "\wr"] 
        & F \arrow[d, "\wr"] \\
        J^3_{X/k}\times (\on{CH}^2_{X/k})^1 \arrow[r] 
        & (\on{CH}^2_{X/k})^1,
    \end{tikzcd}
    \]
    where the horizontal maps are given by the torsor actions.
\end{rmk}

\end{document}